\newtheorem{thm}{Theorem}[section]
\newtheorem{lem}[thm]{Lemma}
\newtheorem{cor}[thm]{Corollary}
\newtheorem*{thm*}{Theorem}
\newtheorem{rmk}[thm]{Remark}
\newcommand{\ce}{2400}
\newcommand{\cg}{2500}
\newcommand{\cf}{2500}
\newcommand{\LL}{\mathcal L}
\newcommand{\CC}{\mathcal C}
\newcommand{\logeps}{|\mathrm{log}\,\epsilon|}
\newcommand{\step}[1]{\subsection{#1}}
\DeclareMathOperator{\var}{var}
\DeclareMathOperator{\sign}{sgn}
\newcommand{\BV}{\text{BV}}
\newcommand{\om}{\omega}
\newcommand{\Om}{\Omega}
\newcommand{\sig}{\sigma}
\newcommand{\lam}{\lambda}
\newcommand{\ka}{\kappa}
\newcommand{\ep}{\epsilon}
\newcommand{\PP}{\ensuremath{\mathbb P}}
\newcommand{\hpt}{\ensuremath{\mathscr{H}_p^t}}
\begin{document}
\title[Random metastable maps]{
Lyapunov exponents for transfer operator cocycles of metastable maps: a quarantine approach
}
\author{Cecilia Gonz\'alez-Tokman and Anthony Quas}
\address{C. Gonz\'alez-Tokman: School of Mathematics and Physics,
The University of Queensland,
St Lucia, QLD 4072 Australia; \texttt{cecilia.gt@uq.edu.au}
\and
A. Quas: Department of Mathematics and Statistics,
University of Victoria,
Victoria, BC,
CANADA V8W 2Y2; \texttt{aquas@uvic.ca}}
\begin{abstract}
This works investigates the Lyapunov--Oseledets spectrum of transfer operator cocycles associated to  one-dimensional random
\emph{paired tent maps}
depending on a parameter $\epsilon$, quantifying the strength of the \emph{leakage} between two nearly invariant regions. We show that the system exhibits metastability, and identify the second Lyapunov exponent $\lambda_2^\ep$ within an error of order $\ep^2|\log \ep|$. 
This approximation agrees with the naive prediction provided by a time-dependent two-state Markov chain. 
Furthermore, it is shown that $\lambda_1^\ep=0$ and $\lambda_2^\ep$ are simple, and the only exceptional
Lyapunov exponents of magnitude greater than $-\log2+ O(\log\log\tfrac 1\epsilon/\log\tfrac 1\epsilon)$.\\ \ \\
2010 Mathematics Subject Classification. Primary 37H15.\\
Key words and phrases:  Multiplicative ergodic theory, Lyapunov exponents, transfer operators, metastability.
\end{abstract}

\maketitle


\section{Introduction}
A dynamical system is called metastable if there is a time scale over which its statistical behaviour appears to be in equilibrium, but such an equilibrium differs from the long term (asymptotic) equilibrium for the system. Metastability has been studied in connection with phase transitions in physics, the existence of isomers in chemistry and the presence of slowly mixing regions in the ocean. 
In mathematics, metastability has been initially studied from a probabilistic perspective; see for instance the monographs \cite{FreidlinWentzel,BovierdenHollander} and references therein. 

In  \cite{KellerLiverani09}, Keller and Liverani pioneered the study of metastability from a (chaotic) dynamical systems perspective. In this context, a natural way to define metastability is through spectral properties of transfer operators. When the system is ergodic, the transfer operator has a unique eigenvector of eigenvalue one, describing the system's long term behaviour. Metastability is associated with the presence of an eigenvalue of magnitude near, but strictly less than one. Its corresponding eigenvector, sometimes called strange eigenmode, characterises the possible behaviours of the system in relatively long, yet not asymptotic, time scales.
This point of view has been developed by various authors  in \cite{FroylandStancevic10,GTHuntWright,DolgopyatWright,Keller12,BahsounVaienti,FergusonPollicott}.

While the spectrum captures information about iterates of a \emph{single operator},
if the forcing is stationary, the spectral picture can often be replaced by the so-called Oseledets decomposition, discovered by Oseledets in his breakthrough paper \cite{Oseledets}, and adapted to the context of transfer operators by Froyland, Lloyd and Quas in \cite{FLQ1,FLQ2}. In this setup, the largest Lyapunov exponent is zero, corresponding to a (random) invariant measure for the system, and a natural way to define metastability is through the existence of a negative Lyapunov exponent $\lambda_2$ near zero.
Then, the associated Oseledets spaces provide a \emph{coherent structure} which decays at the slow exponential rate $\lambda_2$.  

While the connection between autonomous and non-autonomous concepts of metastability is transparent at a conceptual level, the identification of examples of random metastability has proved more elusive.
In \cite{FroylandStancevic13}, the authors investigate connections of this phenomenon with escape rates from random sets.
  Very recently, there has been some further progress: In \cite{Horan1} and \cite{Horan2}, Horan provides bounds on the second Lyapunov exponent for a class of random interval maps using cone techniques, and in \cite{Crimmins}, Crimmins shows a result on stability for hyperbolic Oseledets splittings for quasi-compact cocycles, modelled on the famous stability result of Keller and Liverani \cite{KellerLiverani99}.

A significant obstacle in the construction of examples of random metastable systems is that Lyapunov exponents are asymptotic quantities, which are generally difficult to rigorously bound, especially from below. In fact, it has been recently  shown by the authors in \cite{GTQ-jems20} that small perturbations in the dynamics can result in drastic changes in the Lyapunov--Oseledets spectrum of a system of transfer operators.

In this work, we consider random compositions of a class of one-dimensional piecewise smooth expanding maps, and investigate the corresponding transfer operator cocycle in the space of functions of bounded variation.
This framework was introduced by Buzzi in \cite{Buzzi00}, where he showed existence of random absolutely continuous invariant measures, corresponding to the first Oseledets space with associated Lyapunov exponent equal to zero.

 Here we focus on the so-called \emph{paired tent maps}, defined in \eqref{eq:pairedtentmap} following \cite{Horan1}, and introduce a parameter $\epsilon$, to quantify the strength of the \emph{leakage} between nearly invariant components.
 We show that for small $\ep>0$ the second Lyapunov exponent $\lambda_2^\ep$ is of order $\ep$. In fact, we identify $\lambda_2^\ep$ within an error of order $\ep^2|\log \ep|$. 
This approximation agrees with the naive prediction that a simple coarse graining by a time-dependent two-state Markov chain would yield, corresponding to a two-bin Ulam approximation scheme.
In addition, we show the corresponding Oseledets space is 
 one-dimensional, and in some sense close to a step function. Therefore, it separates the two nearly invariant components of the system, in agreement with the (random version of the) Dellnitz--Froyland Ansatz. 
Furthermore, it is shown that, apart from 0 and $\lambda_2^\ep$, there are no other Lyapunov exponents of magnitude greater than $-\log2+ O(\log\log\tfrac 1\epsilon/\log\tfrac 1\epsilon)$. It follows from \cite{FLQ2} that all exceptional Lyapunov exponents are greater than $-\log2+ O(\ep)$.

The main idea of the proof is to keep track of the \emph{mass} that has switched from one nearly invariant component to the other. 
One could think of this bookkeeping procedure as a \emph{quarantine} period, during which the \emph{leaked} densities are 
kept separate from the rest, until they have been mixed. In this way, it is possible to identify an invariant subset of densities, $\mathcal C$, in some sense close to the invariant densities for the $\ep=0$ map, which is shown to contain a slowly decaying coherent structure.  

While our main results are stated in the context of random dynamical systems, that is, assuming there is an ergodic process $\sigma$ driving the dynamics, many of our arguments carry over to the non-stationary setting. Indeed, one may define orbit-wise Lyapunov exponents as in \eqref{eq:lexp}. Then, 
the set $\mathcal C$ of \S\ref{s:invariantSet} remains invariant in the non-stationary situation, and
 the results of \S\ref{s:1st2ndexp} --except for the use of Birkhoff ergodic theorem-- and \S\ref{s:3rdLexp} would yield estimates  on first, second and third Lyapunov exponents of the non-stationary cocycle.

\section{Notation and main results}
For $0\le a,b\le 1$, we define a \emph{paired tent map} $T_{a,b}$ to be the piecewise linear map on $J= [-1,1]$ sending $-1$ to $-1$, $-\tfrac 12$ to $b$ and $0^-$ to $-1$; $0^+$ to 1; $\tfrac 12$ to $-a$ 
and 1 to 1, as introduced in \cite{Horan1}. That is,
\begin{equation}\label{eq:pairedtentmap}
T_{a,b}(x) = \begin{cases}
2(1+b)(x+1) - 1, & x\in [-1,-1/2], \\
-2(1+b)x - 1, & x\in [-1/2,0), \\
0, & x = 0, \\
-2(1+a)x + 1, & x\in (0,1/2], \\
2(1+a)(x-1) + 1, & x\in [1/2,1].
\end{cases} 
\end{equation}

In the case $a=b=0$, the system is reducible, consisting of
of a pair of tent maps on disjoint intervals, $[-1,0]$ and $[0,1]$. 
For small positive $a$ and $b$, there is a small amount of \emph{leakage} between the two halves:
points near $-\tfrac 12$ leak
to the right half, while points near $\tfrac 12$ leak to the left half. 
We can think of $a$ and $b$ as leakage controls.

We study cocycles of paired tent maps, driven by an ergodic, invertible, probability preserving transformation $\sig:\Om \to \Om$, with the leakage controls $a(\om)$ and $b(\om)$ scaled by a parameter $\epsilon$,
and look at the asymptotics of the Lyapunov exponents of the associated Perron-Frobenius
operator cocycle as $\epsilon$ approaches 0. 

We let $\text{BV}$ denote the Banach space of functions of bounded variation on 
$[-1,1]$, where, as usual, we identify functions that disagree on a set of measure 0. 
For convenience later, we equip \text{BV} with a non-standard norm (although it is equivalent 
to the standard one), namely we define
$$
\|f\|_\text{BV}=\max(\var_{0^c}f,\|f\|_1),
$$
where
$\var_{0^c}f$ denotes $\var_{[-1,0)}f+\var_{(0,1]}f$. 
For any $I\subset [-1,1]$, we let $\int_I f:=\int_I f dm$, where $m$ is the Lebesgue measure, and $\|f\|_1=\int_{[-1,1]}|f|dm$.
We also let $\sign=\mathbf{1}_{[1,0]}-\mathbf{1}_{[-1,0]}$ be the sign function.

The main result of this work is the following.
\begin{thm}\label{mainthm}
Let $\sigma$ be an invertible ergodic measure-preserving transformation of a probability space 
$(\Omega,\PP)$. Let $a$ and $b$ be non-zero measurable functions from $\Omega$ to $[0,1]$,
and for each $\omega\in\Omega$ and $\epsilon\in(0,1)$, let $T^\epsilon_\omega$ denote the map $T_{\epsilon a(\omega),\epsilon b(\omega)}$.
Let $\LL^\epsilon_\omega$ denote the corresponding Perron-Frobenius operator. 

Then the leading two Lyapunov exponents of the cocycle ${\mathcal L^\epsilon}^{(n)}_\omega$ 
(acting on BV)
are $\lambda^\ep_1=0$ and $\lambda_2^\ep=-\epsilon\int(a(\omega)+b(\omega))\,d\PP(\omega)+O(\epsilon^2\logeps)$.
Furthermore, $\lambda^\ep_1$ and $\lambda^\ep_2$ have multiplicity one and the remainder of the Lyapunov spectrum is bounded above by $-\log 2+O(\log\log\tfrac 1\epsilon/\log\tfrac 1\epsilon)$. 
\end{thm}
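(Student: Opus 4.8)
The plan is to analyze the transfer operator cocycle perturbatively around $\epsilon = 0$, where the dynamics splits into two uncoupled tent maps. At $\epsilon=0$ the operator $\LL^0$ preserves $\BV([-1,0))\oplus\BV((0,1])$, and on each half it has a spectral gap: leading eigenvalue $1$ (with eigenfunction the half-Lebesgue density) and the rest of the spectrum bounded by $1/2$ in modulus, reflecting the factor-$2$ expansion. So for $\epsilon=0$ the cocycle has a two-dimensional leading Oseledets space spanned by $\mathbf 1_{[-1,0)}$ and $\mathbf 1_{(0,1]}$ (both with exponent $0$), and everything else has exponent $\le -\log 2$. The whole point is that turning on $\epsilon$ splits this degenerate exponent-$0$ pair into $\lambda_1^\epsilon = 0$ and a slightly negative $\lambda_2^\epsilon$, governed by the leakage, while the rest of the spectrum moves by only $O(\epsilon)$.

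The first step is to construct the invariant cone $\CC$ described in the introduction: densities that are (say) nonnegative, supported near the correct ``two-box'' profile, with controlled variation away from $0$, and such that the leaked mass is tracked via the quarantine bookkeeping. One shows $\LL^\epsilon_\omega \CC \subseteq \CC$ for all $\omega$, using the explicit piecewise-linear form \eqref{eq:pairedtentmap}: the bulk of a density gets contracted in variation by the factor-$2$ expansion, and only an $O(\epsilon)$-fraction of mass leaks across $0$ per step. Within $\CC$ one then sets up a $2\times 2$ ``effective'' dynamics on the pair $(\int_{[-1,0)} f, \int_{(0,1]} f)$: each application of $\LL^\epsilon_\omega$ moves an $\epsilon a(\omega)$ fraction from the right box and an $\epsilon b(\omega)$ fraction from the left box, up to $O(\epsilon^2)$ corrections coming from the fact that the leaked mass is not yet equidistributed (this is exactly where the quarantine estimate and the $\epsilon^2|\log\epsilon|$ error enter — it takes $O(|\log\epsilon|)$ steps to mix the leaked $O(\epsilon)$ mass, contributing $O(\epsilon^2|\log\epsilon|)$). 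The matrix cocycle is therefore a small perturbation of a product of stochastic matrices $\begin{pmatrix} 1-\epsilon b & \epsilon a \\ \epsilon b & 1-\epsilon a\end{pmatrix}$; its top exponent is $0$ (preservation of total mass, i.e. the random ACIM from \cite{Buzzi00}) and its second exponent is computed via Birkhoff's ergodic theorem applied to $\log(1 - \epsilon(a(\omega)+b(\omega)))$, giving $-\epsilon\int(a+b)\,d\PP + O(\epsilon^2)$; combined with the mixing error this yields the stated $\lambda_2^\epsilon$.

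For the simplicity of $\lambda_1^\epsilon$ and $\lambda_2^\epsilon$ and the gap down to $-\log 2 + O(\log\log\tfrac1\epsilon/\log\tfrac1\epsilon)$, the plan is to quotient out the two-dimensional ``slow'' subspace and show that on a complementary equivariant family of subspaces the cocycle contracts variation at rate essentially $\tfrac12$. Concretely, a density in $\CC$ with both box-integrals zero has no slow component, and iterating $\LL^\epsilon_\omega$ on it: each step halves the variation of the bulk but also produces a fresh $O(\epsilon)$ leak which itself needs $O(\log\tfrac1\epsilon)$ steps to be absorbed; optimizing the number of steps over which one tracks a leak against the accumulated $2^{-n}$ contraction produces the $\log\log\tfrac1\epsilon / \log\tfrac1\epsilon$ loss in the exponent rather than a clean $-\log 2$. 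Multiplicity one for $\lambda_2^\epsilon$ then follows because the $2\times 2$ effective cocycle has simple second exponent (the two rows of the stochastic matrix are genuinely different since $a,b \not\equiv 0$), and the full cocycle's exponent-$\lambda_2^\epsilon$ space is one-dimensional, spanned by a vector close to $\mathbf 1_{(0,1]} - \mathbf 1_{[-1,0)}$ up to an $O(\epsilon)$ correction — the advertised near-step-function coherent structure. The lower bound $\lambda_2^\epsilon, \dots > -\log 2 + O(\epsilon)$ for all exceptional exponents is quoted directly from \cite{FLQ2}.

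The main obstacle, and the technical heart of the argument, is the quarantine estimate: controlling, uniformly in $\omega$ and over the random composition, how the $O(\epsilon)$ mass leaked at each step relaxes toward the box-Lebesgue profile under subsequent iterates, so that its feedback into the effective $2\times 2$ dynamics is genuinely $O(\epsilon^2|\log\epsilon|)$ and not merely $O(\epsilon)$. This requires a careful Lasota–Yorke-type inequality on $\BV$ that separates the ``quarantined'' component (whose variation decays geometrically at rate $\tfrac12$) from its total mass (which is conserved and only slowly redistributed), and verifying that the cone $\CC$ is preserved requires matching the constants in that inequality against the $\epsilon$-scale of the leakage. Everything downstream — the exponent computation, simplicity, and the spectral gap — is then a relatively routine consequence of the multiplicative ergodic theorem of \cite{FLQ1,FLQ2} applied to the near-block-triangular structure of $\LL^\epsilon_\omega$ on $\CC$.
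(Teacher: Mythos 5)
Your outline of the first two exponents matches the paper's argument in all essentials: the quarantine bookkeeping (implemented in the paper as an operator $\Lambda_\omega$ on $(k+1)$-tuples of BV functions with $k\approx\log_2\tfrac1\epsilon$), the invariant set $\CC$, the effective two-state dynamics on the pair of box integrals $(\phi^-,\phi^+)$ with $O(\epsilon^2\logeps)$ error coming from the $O(\logeps)$ mixing time of leaked mass, and Birkhoff's theorem. One small correction: $\CC$ cannot be a cone of nonnegative densities --- to run the $\lambda_2^\ep$ argument you must feed it signed functions such as $\sign$ and, more generally, $f-c\,\sign$ for arbitrary $f$ of integral zero, so the defining conditions are relative bounds of the form $\var_{0^c}f_0\le 33\epsilon\|f_0\|_1$ rather than positivity.

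The genuine gap is in the last part. The set $\{f:\int_{J^+}f=\int_{J^-}f=0\}$, on which you propose to iterate the rate-$\tfrac12$ contraction, is \emph{not} equivariant: after one application of $\LL^\epsilon_\omega$ the box integrals become $\pm\bigl(\int_{H_\omega^-}f-\int_{H_\omega^+}f\bigr)=O(\epsilon)\|f\|_\text{BV}$, and if you simply keep iterating, this $O(\epsilon)$ per-step feedback into the slow mode accumulates and dominates the $2^{-n}$ contraction, leaving you with decay only at rate $\lambda_2^\ep$, not $-\log2$. The paper's Lemma~\ref{lem:lambda3} is correspondingly only a \emph{one-block} estimate ($\|\LL_\omega^{(k)}f\|_\text{BV}\le 144\epsilon\logeps\|f\|_\text{BV}$ for such $f$), and the missing ingredient is a genuinely equivariant projection: Lemma~\ref{lem:invFunctional} constructs the functionals $\psi^{(n)}_\omega(f)=\int_{J^+}\LL_\omega^{(n)}(f-\tfrac12\int f)\big/\int_{J^+}\LL_\omega^{(n)}\sign$, proves they converge to $\psi^*_\omega$, and at each block re-centres by subtracting $(a_{n+k}-a_n)\LL_\omega^{(n+k)}\sign$, using a lower bound on $\|\LL_\omega^{(n)}\sign\|_\text{BV}$ to close the induction. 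Simplicity of $\lambda_2^\ep$ then follows because every three-dimensional subspace meets $\{\int f=0,\ \psi^*_\omega(f)=0\}$ nontrivially --- not, as you suggest, because the $2\times2$ effective cocycle has a simple second exponent; that cocycle only sees the projection onto box integrals and cannot by itself exclude a second BV direction with exponent $\lambda_2^\ep$. Relatedly, you cannot defer the conclusion to the MET of \cite{FLQ1,FLQ2}: as noted in Remark~\ref{rmk:met}, $\omega\mapsto\LL_\omega^\epsilon$ is not measurable into BV when $a,b$ have uncountable range, which is why the exponents are defined via the Grassmannian formula \eqref{eq:lexp} and all growth rates are established by hand.
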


\begin{rmk}\label{rmk:met}
Horan, in his thesis (See \cite[Remark 4.5.7]{HoranThesis}), discusses the fact that the known versions
of operator-valued METs when applied to Perron-Frobenius operators on BV only prove the existence of
(measurable) Oseledets spaces in cases where the map $\omega\mapsto T_\omega$ has countable range. He
explains that this is not an artefact of the proof, but rather a consequence of the geometry of BV. Indeed, he
points out that $\{\mathcal L_{T_{a,b}}\mathbf 1\}$ is a uniformly discrete set as $(a,b)$ run over $[0,1]^2$. This means that
even the map $(a,b)\mapsto \LL_{T_{a,b}}\mathbf 1$ is not measurable with respect to the Borel $\sigma$-algebra
on BV. 

In the absence of an MET for more general choices of $\omega\mapsto T_\omega$, a natural way to interpret the statement of Theorem~\ref{mainthm} is by defining the $k$-th Lyapunov exponent $\lam_k^\ep(\om)$ as the supremum over $k$-dimensional subspaces $V$ of the minimal growth rate of $\|\LL_\omega^{(n)}f\|_\text{BV}$, for $f\in V\setminus \{0\}$. That is,
\begin{equation}\label{eq:lexp}
\lam_k^\ep(\om)=\sup_{V\in \mathcal G_k(\BV)} \inf_{f\in V\setminus \{0\}} \limsup_{n\to\infty}
\tfrac1n\log \|\LL_\omega^{\ep(n)}f\|_\text{BV},
\end{equation}
where $\mathcal G_k(\BV)$ is the Grassmannian of $k$-dimensional subspaces of BV.
In this context, our arguments show that for $\PP$-a.e. $\om\in\Om$, $\lam_1^\ep(\om)=0$, $\lambda_2^\ep(\om)=-\epsilon\int(a(\omega)+b(\omega))\,d\PP(\omega)+O(\epsilon^2\logeps)$ and
$\lambda_3^\ep(\om)\leq -\log 2+O(\log\log\tfrac 1\epsilon/\log\tfrac 1\epsilon)$.

In Section~\ref{s:METs}, we give an alternative interpretation of Theorem~\ref{mainthm} in the context of multiplicative ergodic theory. In particular, it implies measurability of the top Oseledets spaces when regarded as functions from $\Om$ to $L^1$.
\end{rmk}

\subsection{Connection with time-dependent two-state Markov chains}
Let $A^\epsilon_\omega$ denote the matrix $\begin{pmatrix}1-\epsilon b(\omega)&\epsilon a(\omega)\\
\epsilon b(\omega)&1-\epsilon a(\omega)\end{pmatrix}$, with $a$ and $b$ as in the statement of Theorem~\ref{mainthm}. Let ${A^\epsilon}^{(n)}_\omega=A^\epsilon_{\sig^{n-1}\omega} \circ \dots \circ A^\epsilon_\omega$ be the 
corresponding matrix cocycle. This is the sequence of transition (Ulam) matrices one would obtain by coarsely partitioning $J=[-1,1]$ into $J^-=[-1,0]$
and $J^+=[0,1]$.

 The two Lyapunov exponents 
of the cocycle ${A^{\epsilon}_\omega}^{(n)}$ are 0 and 
$\int\log(1-\epsilon(a(\omega)+b(\omega))\,d\PP(\omega)$. To see this, notice that $\begin{pmatrix}
1&1\end{pmatrix}A_\omega^\epsilon=\begin{pmatrix}1&1\end{pmatrix}$, ensuring that 0 is a Lyapunov
exponent of the dual cocycle, and hence of the primal cocycle. Also, the sum of the Lyapunov exponents is
given by the average value of the log-determinant, so that the second Lyapunov exponent is
$\int\log(1-\epsilon(a(\omega)+b(\omega)))\,d\PP(\omega)$ as claimed. By taking a Taylor expansion, 
we see that the second Lyapunov exponent is $-\epsilon\int (a(\omega)+b(\omega))\,d\PP(\omega)+O(\epsilon^2)$
so that the top two Lyapunov exponents of the paired tent map cocycle $\LL^\epsilon_\omega$ of Theorem~\ref{mainthm} agree with those of its coarse Ulam approximation up to order $\epsilon^2\logeps$.

\section{Proof of the main theorem}
In this section, we present the proof of Theorem~\ref{mainthm}. 
Let $\epsilon>0$ be fixed and sufficiently small (with the precise conditions specified below, but where 
$\epsilon<\tfrac{1}{2000}$ suffices).
Let $k$ be such that $2^k\epsilon<\tfrac 14$ but 
$2^{k+1}\epsilon\ge \tfrac 14$. 

For a fixed $\omega$ and $\epsilon$, let $H^+_\omega=
[\tfrac 1{2(1+\epsilon a(\omega))},1-\tfrac 1{2(1+\epsilon a(\omega))}]$, the set of points that leak from
$J^+:=[0,1]$ to $J^-:=[-1,0]$ under $T^\epsilon_\omega$ and similarly let 
$H^-_\omega=[-1+\tfrac 1{2(1+\epsilon b(\omega))},-\tfrac 1{2(1+\epsilon b(\omega))}]$, the set of points 
leaking from  $J^-$
to $J^+$.
We set $H_\omega=H_\omega^+\cup H_\omega^-$.

We consider the space of $(k+1)$-tuples of $\text{BV}$ functions: $X=(\text{BV}[-1,1])^{k+1}$ and define for each
$\omega$, an operator $\Lambda_\omega$ on $X$ by
\begin{align*}
\Lambda_\omega(f_0,\ldots,f_k)=(\mathcal L_\omega (f_0\mathbf 1_{H_\omega^c}+f_k),
\mathcal L_\om(\mathbf 1_{H_\omega}f_0),\mathcal L_\om f_1,\ldots
,\mathcal L_\om f_{k-1}),
\end{align*}
where $\mathcal L_\omega=\mathcal L^\epsilon_\omega$. 
Note that $H_\omega$ and $\Lambda_\omega$ also depend on $\ep$, but since
 $\ep$ will be fixed throughout the proofs, we do not write this dependence explicitly. 

The above should be interpreted as follows: any two visits to the hole are separated by at least $k$ steps. 
The overall density is represented by $f_0+\ldots+f_k$. For $1\le j\le k$, the term $f_j$ 
represents the part of the density coming from mass that leaked through one of the holes $j$ steps ago,
while $f_0$ is the part of the density coming from mass that has not recently passed through a hole,
so that, for example, the 0-term of $\Lambda_\omega(f_0,\ldots,f_k)$ consists of two parts: the image of
the part of $f_0$ that did not pass through the hole, together with the image of $f_k$, the contribution 
to the density that passed through the hole $k$ steps previously.

\step{Construction of an invariant set $\CC$}\label{s:invariantSet}
Let $\CC$ be the collection of elements $\bar f=(f_0,\ldots,f_k)$ of $X$ satisfying the following conditions:
\begin{enumerate}[($\CC$1)]
\item $\var_{0^c} f_j\le 4\cdot \big(2(1-39\epsilon)\big)^{-j}\|f_0\|_1$ for $j=1,\ldots,k$;\label{it:varfj}
\item $\|f_j\|_1\le 3(1-39\epsilon)^{-j}\epsilon\|f_0\|_1$ for $j=1,\ldots,k$;\label{it:L1fj}
\item $\var_{0^c}f_0\le 33\epsilon \|f_0\|_1$.\label{it:varf0}
\end{enumerate}

We now show $\CC$ is invariant under each $\Lambda_\omega$.
Let $\bar f=(f_0,\ldots,f_k)\in\mathcal C$, $\omega\in\Omega$
and  $\bar g=(g_0,\ldots,g_k)=\Lambda_\omega\bar f$.
We first prove that $\|g_0\|_1\ge (1-39\epsilon)\|f_0\|_1$. 

Since $\var_{0^c}f_0\le 33\epsilon \|f_0\|_1$, we see that 
for any $x\in J^+$, 
\begin{equation}
\left|f_0(x)-\textstyle\int_{J^+} f_0\right|\le33\epsilon \|f_0\|_1,
\label{eq:f0J+}
\end{equation}
and similarly for any $x\in J^-$, 
\begin{equation}\left|f_0(x)-\textstyle\int_{J^-} f_0\right|\le33\epsilon \|f_0\|_1.
\label{eq:f0J-}
\end{equation}

If $\int_{J^+}|f_0|\ge 33\epsilon\|f_0\|_1$, then $|f_0|$ takes a value at least 
$33\epsilon\|f_0\|_1$ in $J^+$, so that \ref{it:varf0} implies $f_0$ is non-negative throughout $J^+$
or $f_0$ is non-positive throughout $J^+$. In either case we see that 
\begin{align*}
\int_{J^+}|\LL_\omega
(f_0\mathbf 1_{H_\omega^c})|&=\left|\int_{J^+}\LL_\omega(f_0\mathbf 1_{H_\omega^c})\right|\\
&=\left|\int_{J^+}f_0\mathbf 1_{H_\omega^c}\right|=\int_{J^+}|f_0|-\int_{H_\omega^+}|f_0|\\
&\ge \int_{J^+}|f_0|-m(H_\omega^+)\left(\int_{J^+}|f_0|+
33\epsilon\|f_0\|_1\right)\\
&\ge \int_{J^+}|f_0|-2\epsilon\|f_0\|_1,
\end{align*}
where we used the fact that $m(H_\omega^+)\le \epsilon$ and we assumed 
that $\epsilon <\tfrac1{33}$. Of course if $\int_{J^+}|f_0|<33\epsilon\|f_0\|_1$, then
$\int_{J^+}|\LL_\omega(1_{H_\omega^c}f_0)|\ge 0> \int_{J^+}|f_0|-33\epsilon\|f_0\|_1$.

A completely symmetric argument holds for $J^-$. 
Assume $\epsilon<\tfrac 1{66}$. Then, at least one of 
$\int_{J^+}|f_0|\ge 33\epsilon\|f_0\|_1$ and $\int_{J^-}|f_0|\ge 
33\epsilon\|f_0\|_1$ is satisfied. Hence, summing
the relevant inequalities, we obtain
\begin{equation*}
\left\|\LL_\omega f_0\mathbf 1_{H_\omega^c}\right\|_1\ge \|f_0\|_1(1-35\epsilon).
\end{equation*}

We then have 
\begin{equation*}
\begin{split}
\|g_0\|_1&=
\|\LL_\omega (f_0\mathbf 1_{H_\omega^c}+f_k)\|_1\\
&\ge \|\LL_\omega (f_0\mathbf 1_{H_\omega^c})\|_1-\|\LL_\omega f_k\|_1\\
&\ge(1-35\epsilon)\|f_0\|_1-\|f_k\|_1\\
&\ge \Big(1-\left(35+3/(1-39\epsilon)^k\right)\epsilon\Big)\|f_0\|_1.
\end{split}
\end{equation*}

At this point, recalling that $k\geq \log_2(\tfrac 1\epsilon)-3$, 
let us assume that $\epsilon$ is sufficiently small that $3/(1-39\epsilon)^{k}<4$,
so that 
\begin{equation}
\|g_0\|_1\ge (1-39\epsilon)\|f_0\|_1.
\end{equation}

Now if for $1\le j<k$, $\var_{0^c}f_j\le 4/\big(2(1-39\epsilon)\big)^j\|f_0\|_1$,
then 
\begin{align*}
\var_{0^c}g_{j+1}&=\var_{0^c}\LL_\omega f_j\\
&\le \tfrac 12\var_{0^c}f_j\\
&\le 4/\big(2(1-39\epsilon)\big)^{j+1}
(1-39\epsilon)\|f_0\|_1\\
&\le 4/\big(2(1-39\epsilon)\big)^{j+1}\|g_0\|_1,
\end{align*}
so that $\bar g$ satisfies \ref{it:varfj} for $j=2,\ldots,k$. 

Similarly, since $\bar f$ satisfies \ref{it:L1fj}, then for 
$1\le j<k$
\begin{align*}
\|g_{j+1}\|_1&=\|\LL_\omega f_j\|_1\\
&\le \|f_j\|_1\le 3(1-39\epsilon)^{-j} \ep\|f_0\|_1\\
&\le 3(1-39\epsilon)^{-(j+1)} \ep \|g_0\|_1,
\end{align*}
so that $\bar g$ satisfies \ref{it:L1fj} for $j=2,\ldots,k$. 

To establish \ref{it:L1fj} for $g_1$, recall
$g_1=\LL_\omega(f_0\mathbf 1_{H_\omega})$ so that
$\|g_1\|_1\le\|f_0\mathbf 1_{H_\omega}\|_1$. 
From \eqref{eq:f0J+}, we see $|f_0|$ takes values at most $|\int_{J^+}f_0|+33\epsilon\|f_0\|_1$ on $H^+$
and similarly takes values at most $|\int_{J^-}f_0|+33\epsilon\|f_0\|_1$ on $H^-$.
Hence $\|g_1\|_1\le m(H_\omega)\big(\|f_0\|_1+33\epsilon \|f_0\|_1\big)\le
3\epsilon\|g_0\|_1$ (where we assumed that $\epsilon$ was sufficiently small that
$2\epsilon(1+33\epsilon)/(1-39\epsilon)<3\epsilon$), thereby establishing \ref{it:L1fj} for $j=1$.

We now show \ref{it:varfj} for $j=1$. We have $\var_{0^c}g_1=\var_{0^c}(\LL_\omega
\mathbf 1_{H_\omega}f_0)
\le \tfrac 12\var_{H_\omega}(f_0)+|f_0(\tfrac 12)|+|f_0(-\tfrac 12)|
\le \tfrac {33}2\epsilon\|f_0\|_1+(|\int_{J^+}f_0|+33\epsilon\|f_0\|_1)
+(|\int_{J^-}f_0|+33\epsilon\|f_0\|_1)\le (1+\tfrac{165}2\epsilon)\|f_0\|_1\le
2/(1-39\epsilon)\|g_0\|_1$ as required. 

Finally, we establish \ref{it:varf0} for $\bar g$.
We have $\var_{0^c}g_0=\var_{0^c}\LL(\mathbf 1_{H_\omega^c}f_0+f_k)
\le \tfrac 12(\var_{0^c}f_0+\var_{0^c}f_k)\le \tfrac{33}2\epsilon\|f_0\|_1+
2\cdot 2^{-k}(1-39\epsilon)^{-k}\|f_0\|_1$. By the choice of $k$, we have 
$2^{-k}\le 8\epsilon$, so that provided $\tfrac{33}2+16(1-39\epsilon)^{-k}<33(1-39\epsilon)$, we have
$\var_{0^c}g_0\le 33(1-39\epsilon)\ep\|f_0\|_1\le 33\ep\|g_0\|_1$. 

This completes the verification of the invariance of $\CC$ under $\Lambda_\omega$.

\step{Estimation of first and second Lyapunov exponents}\label{s:1st2ndexp}
Up to this point, we have written explicit bounds in terms of $\epsilon$, since the bounds are inter-dependent
and this allows us to verify that they are simultaneously satisfiable. At this point, we switch to using the more compact
$O(\cdot)$ notation (that is $A=O(B)$ means that there is a universal constant $C$
such that for all $\bar f\in \CC$, the inequality $|A(f)|\le C|B(f)|$ is satisfied).
The rationale for using this notation from here on is that from now on,
each inequality will be seen to follow from earlier estimates. Additionally, we can conveniently write
expressions like $\phi(\Lambda\bar f)=\phi(f)(1+c\epsilon+O(\epsilon^2\logeps))$.
It would not be hard to write explicit constants in place of the $O(\cdot)$ notation if desired.

For $\bar f=(f_0,\ldots,f_k)\in X$, we let 
$$\Phi(\bar f )=f_0+\ldots+f_k\in \BV.$$

\subsubsection{Estimation of the first Lyapunov exponent}\label{s:1stExp}
If $f\in \BV$,
then for a suitably large constant $c$,
$f$ may be expressed as $f=g+h$ where $g=f-c$, $h=c$, and $\var_{0^c}g<33\ep \|g\|_1$.
Write $\bar g=(g,0,\ldots,0)$ and $\bar h=(h,0,\ldots,0)$. Then, $\bar g,\bar h\in\CC$. 
It is straightforward to verify that for any $\bar f=(f_0, \dots, f_k)\in X$, $\sum_{i=0}^k \|(\Lambda_\omega \bar f)_i\|_1\le \sum_{i=0}^k \|f_i\|_1$.
Hence,  $\|(\Lambda_\omega^{(n)}\bar g)_0\|_1$
and $\|(\Lambda_\omega^{(n)}\bar h)_0\|_1$ are uniformly bounded above in $n$, so that
by the definition of $\CC$, $\|\Phi(\Lambda_\omega^{(n)}\bar g)\|_\text{BV}$ and
$\|\Phi(\Lambda_\omega^{(n)}\bar h)\|_\text{BV}$ are uniformly bounded above in $n$. 
Since $\LL_\omega^{(n)}f=\Phi(\Lambda_\omega^{(n)}\bar g)-\Phi(\Lambda_\omega^{(n)}\bar h)$,
it follows that $\|\LL_\omega^{(n)}f\|_\text{BV}$ is uniformly bounded above in $n$.

On the other hand, if $\int f\ne 0$, then $\|\LL_\omega^{(n)}f\|_\text{BV}\ge
\|\LL_\omega^{(n)}f\|_1\ge |\int \LL_\omega^{(n)}f|=|\int f|\ne 0$, so that there is a
uniform lower bound also. It follows that 
$\lim_{n\to\infty} \tfrac1n \log \|\LL_\omega^{(n)}f\|_\text{BV}=0$. That is,
the top Lyapunov exponent is $\lambda^\ep_1=0$.

\subsubsection{Estimation of the second Lyapunov exponent}\label{s:2ndExp}
Let $\bar f=(f_0, \dots, f_k)\in X$.
We now study the evolution of the quantities $\phi^\pm(\bar f)=\int_{J^\pm}
(f_0+\ldots+f_k)$.
We first claim that if $\bar f\in\mathcal C$ and $\epsilon$ satisfies the constraints
above, then $\|f_0\|_1=O\big(\max(|\phi^+(\bar f)|,|\phi^-(\bar f)|\big)$.
To see this, notice that $|f_0|$ is at least $\tfrac 12\|f_0\|_1$ at some point $x_0\in J^+\cup J^-$. 
Without loss of generality, we assume $x_0\in J^+$ and $f(x_0)>0$. 
Now \ref{it:varf0} implies $|f_0(x)-f_0(x_0)|\le 33\epsilon
\|f_0\|_1$ for all $x\in J^+$, so that $f_0(x)\ge (\tfrac 12-33\epsilon)\|f_0\|_1$ on $J^+$. 
We now have $\phi^+(\bar f)=\int_{J^+}(f_0+\ldots+f_k)\ge (\tfrac 12-33\epsilon-4k\epsilon)\|f_0\|_1$
where we used \ref{it:L1fj} to estimate $\int_{J^+}f_1,\ldots,\int_{J^+}f_k$. In particular, for
all sufficiently small $\epsilon$ and all $\bar f\in\CC$
\begin{equation}\label{eq:phivsL1}
\tfrac 13\|f_0\|_1\le \max(|\phi^+(\bar f)|,|\phi^-(\bar f)|)\le \|f_0\|_1.
\end{equation}

Writing $\bar g=\Lambda_\omega(\bar f)$, we have, from the Perron-Frobenius property, that
$\int_{J^\pm}g_0=\int_{J^\pm\setminus H_\omega}f_0+\int_{J^\pm}f_k$; 
$\int_{J^\pm}g_1=\int_{H_\omega^\mp}f_0$ (that is, $\int_{J^+}g_1=\int_{H_\omega^-}f_0$ and
vice versa) and
$\int_{J^\pm}g_j=\int_{J^\pm}f_{j-1}$ for $j=2,\ldots,k$.

Summing, we obtain
$$
\phi^\pm(\Lambda_\omega\bar f)=\phi^\pm(\bar f)-\int_{H_\omega^\pm}f_0+\int_{H_\omega^\mp}f_0.
$$
(This equality is intuitively clear when one thinks of $\phi^\pm(\bar f)$ as the total mass on the left or right
side, respectively.)
Adding the two quantities, we obtain $\phi^+(\Lambda_\omega \bar f)+\phi^-(\Lambda_\omega \bar f)
=\phi^+(\bar f)+\phi^-(\bar f)$, that is the conservation of mass.
 Let $\mathcal C_0=\{\bar f\in \mathcal C\colon \phi^+(\bar f)+\phi^-(\bar f)=0\}$.
Thus, if $\bar f\in \CC_0$ then $\Lambda_\omega\bar f\in\CC_0$.

By \eqref{eq:f0J+} and \eqref{eq:f0J-}, 
$$
\left|\int_{H_\omega^\pm}f_0-m(H_\omega^\pm)\int_{J^\pm}f_0\right|
\le 33\epsilon m(H_\omega^\pm)\|f_0\|_1=O(\epsilon^2)\|f_0\|_1.
$$
By \ref{it:L1fj},
$$
\left|\int_{J^\pm}f_0-\phi^\pm(\bar f)\right|\le \frac{3k\epsilon}{(1-39\epsilon)^k}\|f_0\|_1=
O(\epsilon\logeps)\|f_0\|_1.
$$
Since $m(H_\omega^+)=\epsilon a(\omega)/(1+\epsilon a(\omega))=\epsilon a(\omega)+O(\epsilon^2)$
and $m(H_\omega^-)=\epsilon b(\omega)+O(\epsilon^2)$, we obtain
\begin{align*}
\phi^+(\Lambda_\omega\bar f)&=\phi^+(\bar f)-
\epsilon\big(a(\omega)\phi^+(\bar f)-b(\omega)\phi^-(\bar f)\big)
+O(\epsilon^2\logeps)\|f_0\|_1
\\
\phi^-(\Lambda_\omega\bar f)&=\phi^-(\bar f)+
\epsilon\big(a(\omega)\phi^+(\bar f)-b(\omega)\phi^-(\bar f)\big)
+O(\epsilon^2\logeps)\|f_0\|_1
\end{align*}

In the special case $\bar f\in \CC_0$. 
\begin{align*}
\phi^+(\Lambda_\om(\bar f))&=\Big(1-\epsilon\big(a(\omega)+b(\omega)\big)\Big)
\phi^+(\bar f)+O(\epsilon^2\logeps)\|f_0\|_1\\
&=\Big(1-\epsilon\big(a(\omega)+b(\omega)\big)+O(\epsilon^2\logeps)\Big)
\phi^+(\bar f).
\end{align*}

It follows that if $\bar f\in\CC_0$ then
\begin{equation}\label{eq:lambda2}
|\phi^+(\Lambda^{(n)}_\omega \bar f)|=
\prod_{j=0}^{n-1}\Big( 1-\epsilon\big(a(\sigma^j\omega)+b(\sigma^j\omega)\big)
+O(\epsilon^2\logeps)\Big)\phi^+(\bar f).
\end{equation}

Suppose $f\in\text{BV}$ and $\int f=0$. Then, choosing a sufficiently large $c$, 
$f$ may be expressed as $f=g+h$ with $g(x)=f(x)-c\,\sign(x)$, 
$h(x)=c\,\sign(x)$, and $\var_{0^c}g\le 33\epsilon\|g\|_1$. Hence 
$\bar f=(f,0,\ldots,0)$ can be written as $\bar g+\bar h$ where $\bar g=(g,0,\ldots,0)$
and $\bar h=(h,0,\ldots,0)$ with $\bar g,\bar h\in \CC_0$. 
Also, it is straightforward to check that $\LL_\omega^{(n)}f=\Phi\big(\Lambda_\omega^{(n)}(f,0,\ldots,0)\big)$. Hence we have
$\LL_\omega^{(n)}f=\Phi(\Lambda_\omega^{(n)}\bar g)+\Phi(\Lambda_\omega^{(n)}\bar h)$. 
Since $\bar g,\bar h\in \CC_0$, $\phi^+(\Lambda_\omega^{(n)}\bar g)$
and $\phi^+(\Lambda_\omega^{(n)}\bar h)$ decay as in \eqref{eq:lambda2}.
Since $\Lambda_\omega^{(n)}\bar g$ and $\Lambda_\omega^{(n)}\bar h$ belong to $\CC_0$,
\eqref{eq:phivsL1} implies that
$\|(\Lambda^{(n)}_\omega \bar g)_0\|_1$ and $\|(\Lambda^{(n)}_\omega\bar h)_0\|_1$ also have this
rate of decay, so by the definition of $\mathcal C$, $\|\Phi(\Lambda^{(n)}_\omega\bar g)\|_\text{BV}$
and $\|\Phi(\Lambda^{(n)}_\omega\bar h)\|_\text{BV}$ also decay at this rate, which ensures
that 
\begin{equation}\label{eq:CC0growth}
\begin{split}
\limsup_{n\to\infty} & \tfrac1n \log 
\|\LL_\omega^{(n)}f\|_\text{BV}\\ 
&\le  \limsup_{n\to\infty} \tfrac1n \log 
\prod_{j=0}^{n-1}\Big(1-\epsilon\big(a(\sigma^j\omega)+b(\sigma^j\omega)\big)+
O(\epsilon^2\logeps)\Big)\\
&=-\epsilon\int(a(\omega)+b(\omega))\,d\PP(\omega)+O(\epsilon^2\logeps),
\end{split}
\end{equation}
where the Birkhoff ergodic theorem has been used in the last line, together with Taylor's theorem.

We can obtain a corresponding lower bound:
Let $\bar f=(\sign,0,\ldots,0)$ so that $\bar f\in \bar C_0$. 
Then by \eqref{eq:lambda2}, $\phi_+(\Lambda_\omega^{(n)}\bar f)$ may be expressed as an orbit
product of $(1-\epsilon\big(a(\sigma^j\omega)+b(\sigma^j\omega)\big)+
O(\epsilon^2\logeps)$ terms, so that by the same arguments as above,
the growth rate for $\bar f$ has a lower bound that matches the upper bound of \eqref{eq:CC0growth}.

Since any two-dimensional subspace of BV contains a non-zero function with integral 0 (and therefore
with growth rate bounded as in \eqref{eq:CC0growth}), the multiplicity of $\lambda_1^\ep$ is 1 
and 
$$
\lambda_2^\ep= -\epsilon\int (a(\omega)+b(\omega))\,d\PP(\omega)+O(\epsilon^2\logeps),
$$
as required.

\step{Subsequent Lyapunov exponents}\label{s:3rdLexp}
We now show that all other Lyapunov exponents are at most $-\log 2+O(\epsilon\logeps)$. 

Let $\epsilon$ be small and fixed, as before, and let $2^k\epsilon<\tfrac 14\le 2^{k+1}\epsilon$, 
so that $2^{-k}\le 8\epsilon$. 
We record the following statement, which is a recapitulation of  \eqref{eq:lambda2}.

\begin{cor}\label{cor:scaling}
Let $f\in\mathrm{BV}$ satisfy $\int f=0$ and $\var_{0^c}f\le 33\epsilon\|f\|_1$.
Then, 
\begin{equation*}
\int_{J^+}\LL_\omega^{(n)}f=\prod_{j=0}^{n-1}
\left(1-\epsilon\big(a(\sigma^j\omega)+b(\sigma^j\omega)\big)+O(\epsilon^2\logeps)\right)\int_{J^+}f.
\end{equation*}
\end{cor}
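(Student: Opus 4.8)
The plan is to observe that Corollary~\ref{cor:scaling} is essentially a bookkeeping restatement of what was already proved in \S\ref{s:2ndExp}, with the auxiliary $(k+1)$-tuple machinery suppressed. The only work is to check that the hypotheses on $f$ guarantee that lifting $f$ to the tuple $\bar f = (f,0,\ldots,0)$ lands us inside $\CC_0$, and then to unwind the definitions of $\Phi$ and $\phi^+$ so that $\int_{J^+}\LL_\omega^{(n)}f$ coincides with $\phi^+(\Lambda_\omega^{(n)}\bar f)$.

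First I would note that $\int f = 0$ gives $\phi^+(\bar f) + \phi^-(\bar f) = \int_{J^+} f + \int_{J^-} f = \int f = 0$, and $\var_{0^c} f \le 33\epsilon\|f\|_1$ is exactly condition \ref{it:varf0}, while conditions \ref{it:varfj} and \ref{it:L1fj} hold vacuously for the zero components. Hence $\bar f = (f,0,\ldots,0) \in \CC_0$. Second, I would recall the identity $\LL_\omega^{(n)} f = \Phi\big(\Lambda_\omega^{(n)}(f,0,\ldots,0)\big)$ already established in \S\ref{s:2ndExp}; applying $\int_{J^+}$ to both sides and using the definition $\phi^+(\bar g) = \int_{J^+}(g_0 + \cdots + g_k)$ gives $\int_{J^+}\LL_\omega^{(n)} f = \phi^+\big(\Lambda_\omega^{(n)}\bar f\big)$. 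Third, since $\bar f \in \CC_0$ is invariant under each $\Lambda_\omega$, equation~\eqref{eq:lambda2} applies verbatim and yields
\begin{equation*}
\phi^+\big(\Lambda_\omega^{(n)}\bar f\big) = \prod_{j=0}^{n-1}\Big(1 - \epsilon\big(a(\sigma^j\omega) + b(\sigma^j\omega)\big) + O(\epsilon^2\logeps)\Big)\,\phi^+(\bar f),
\end{equation*}
and $\phi^+(\bar f) = \int_{J^+} f$ by construction. Combining the three identities gives the claimed formula.

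There is essentially no obstacle here: the corollary is a \emph{recapitulation}, as the text says, and the real content was in proving the invariance of $\CC$ (and $\CC_0$) and in deriving \eqref{eq:lambda2}. The only point requiring a word of care is that the error term $O(\epsilon^2\logeps)$ is multiplicative per step and uniform over $\CC_0$, so that the product over $j = 0,\ldots,n-1$ is legitimate; this uniformity is exactly what the switch to $O(\cdot)$ notation at the start of \S\ref{s:1st2ndexp} was set up to provide, and it has already been used to obtain \eqref{eq:lambda2}. So the proof is just the three-line identification above, and the main thing to get right is simply citing the correct earlier equations rather than reproving anything.
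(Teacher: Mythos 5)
Your proposal is correct and follows exactly the paper's own argument: lift $f$ to $\bar f=(f,0,\ldots,0)$, verify $\bar f\in\CC_0$ from the hypotheses, and read off the conclusion from \eqref{eq:lambda2}. The paper's proof is precisely this three-line translation into and out of $X$, so nothing further is needed.
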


\begin{proof}
The proof is by translating into and out of the space $X$. 
Let $f$ be as in the statement and let $\bar f=(f,0,\ldots,0)$. 
Then $\bar f\in \CC_0$, and the claimed equality is a direct restatement of \eqref{eq:lambda2}.
\end{proof}

We recall that $\sign=\mathbf{1}_{J^+}-\mathbf{1}_{J^-}$ denotes the sign function.
\begin{cor}\label{cor:signBVphi}
For any $\omega\in\Omega$ and $n\ge 0$, 
$$
\|\LL_\omega^{(n)}\sign\|_\text{BV}\le 15\int_{J^+}\LL_\omega^{(n)}\sign.
$$
\end{cor}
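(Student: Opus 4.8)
The plan is to translate $\sign$ into the space $X$ via $\bar f=(\sign,0,\ldots,0)$, which lies in $\CC_0$ (it has integral zero and $\var_{0^c}\sign=0\le 33\epsilon\|\sign\|_1$), and then exploit the invariance of $\CC_0$ under the cocycle $\Lambda_\omega$. Since $\LL_\omega^{(n)}\sign=\Phi(\Lambda_\omega^{(n)}\bar f)$, it suffices to bound $\|\Phi(\bar g)\|_\text{BV}$ from above in terms of $\int_{J^+}\LL_\omega^{(n)}\sign=\phi^+(\bar g)$, where $\bar g=(g_0,\ldots,g_k)=\Lambda_\omega^{(n)}\bar f\in\CC_0$. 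First I would note that $\Phi(\bar g)=g_0+\cdots+g_k$, so by the triangle inequality $\var_{0^c}\Phi(\bar g)\le\sum_{j=0}^k\var_{0^c}g_j$ and $\|\Phi(\bar g)\|_1\le\sum_{j=0}^k\|g_j\|_1$; applying ($\CC$1)--($\CC$3) and summing the geometric-type series (using $k\ge\log_2(1/\epsilon)-3$ and the earlier standing assumptions on $\epsilon$) gives $\|\Phi(\bar g)\|_\text{BV}\le C\|g_0\|_1$ for a universal constant $C$ — in fact a small constant, since ($\CC$3) bounds $\var_{0^c}g_0$ by $33\epsilon\|g_0\|_1$ and the tail terms contribute only $O(\epsilon)\|g_0\|_1$, while $\|\Phi(\bar g)\|_1\le(1+O(\epsilon\logeps))\|g_0\|_1$.

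Next I would invoke \eqref{eq:phivsL1}: since $\bar g\in\CC_0$, we have $\tfrac13\|g_0\|_1\le\max(|\phi^+(\bar g)|,|\phi^-(\bar g)|)\le\|g_0\|_1$, and because $\bar g\in\CC_0$ means $\phi^+(\bar g)+\phi^-(\bar g)=0$, the two quantities $|\phi^+(\bar g)|$ and $|\phi^-(\bar g)|$ are equal; hence $\|g_0\|_1\le 3|\phi^+(\bar g)|$. Combining with the previous paragraph, $\|\Phi(\bar g)\|_\text{BV}\le 3C|\phi^+(\bar g)|$. To drop the absolute value, I would observe that $\phi^+(\bar f)=\int_{J^+}\sign=1>0$, and by Corollary~\ref{cor:scaling} (equivalently \eqref{eq:lambda2}) $\phi^+(\Lambda_\omega^{(n)}\bar f)$ equals $\phi^+(\bar f)$ times a product of factors $1-\epsilon(a(\sigma^j\omega)+b(\sigma^j\omega))+O(\epsilon^2\logeps)$, each of which is positive for $\epsilon$ small (as $a,b\le 1$ forces each factor to exceed, say, $1-3\epsilon$); therefore $\phi^+(\bar g)>0$ and $|\phi^+(\bar g)|=\phi^+(\bar g)=\int_{J^+}\LL_\omega^{(n)}\sign$. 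This yields $\|\LL_\omega^{(n)}\sign\|_\text{BV}\le 3C\int_{J^+}\LL_\omega^{(n)}\sign$, and tracking the constants carefully (the dominant contribution to $\|\Phi(\bar g)\|_1$ is $\|g_0\|_1$ and to $\var_{0^c}\Phi(\bar g)$ is $\var_{0^c}g_0\le 33\epsilon\|g_0\|_1$ plus the $\sum_j 4\cdot(2(1-39\epsilon))^{-j}\|g_0\|_1\le 8\|g_0\|_1$ from ($\CC$1), so $\|\Phi(\bar g)\|_\text{BV}\le 5\|g_0\|_1$ comfortably) gives the stated constant $15$.

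The main obstacle I anticipate is the bookkeeping to get the explicit constant $15$ rather than merely "$O(1)$": one must be slightly careful that ($\CC$1) gives $\sum_{j=1}^k\var_{0^c}g_j\le\sum_{j\ge1}4\cdot(2(1-39\epsilon))^{-j}\|g_0\|_1$, which is dominated by $4\cdot(2(1-39\epsilon))^{-1}/(1-(2(1-39\epsilon))^{-1})\|g_0\|_1<5\|g_0\|_1$ for $\epsilon$ small, so that $\var_{0^c}\Phi(\bar g)\le(33\epsilon+5)\|g_0\|_1$; then $\|\Phi(\bar g)\|_\text{BV}\le 5\|g_0\|_1\le 5\cdot 3\,\phi^+(\bar g)=15\int_{J^+}\LL_\omega^{(n)}\sign$. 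The positivity of the orbit product — needed to remove the absolute value — is immediate from $a,b\le 1$ and smallness of $\epsilon$, and is the only place the sign of the leakage terms matters. Everything else is a direct application of the invariant-set estimates already established.
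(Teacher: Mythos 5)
Your proof is correct and follows essentially the same route as the paper's: pass to $\bar f=(\sign,0,\ldots,0)\in\CC_0$, use ($\CC$1)--($\CC$3) to sum the geometric series and get $\|\Phi(\Lambda_\omega^{(n)}\bar f)\|_\text{BV}\le 5\|g_0\|_1$, then invoke \eqref{eq:phivsL1} together with $\phi^+(\bar g)=-\phi^-(\bar g)$ to get $\|g_0\|_1\le 3\phi^+(\bar g)$. Your explicit check via Corollary~\ref{cor:scaling} that $\phi^+(\Lambda_\omega^{(n)}\bar f)>0$, so the absolute value may be dropped, is a detail the paper leaves implicit.
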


\begin{proof}
Let $\bar f=(\sign,0,\ldots,0)\in\CC_0$ and consider $\bar g=\Lambda^{(n)}_\omega\bar f$. 
As already noted, $\phi^+(\bar g)=\int_{J^+}\LL_\omega^{(n)}\sign$. By \eqref{eq:phivsL1},
$\|g_0\|_1\le 3\phi^+(\bar g)$. Using \ref{it:varfj}, \ref{it:L1fj} and \ref{it:varf0}, we see that
for small $\epsilon$, $\|\LL_\omega^{(n)}\sign\|_\text{BV}=
\|g_0+\ldots+g_k\|_\text{BV}\le 5\|g_0\|_1\le 15\phi^+(\bar g)=15\int_{J^+}\LL_\omega^{(n)}\sign$.
\end{proof}

\begin{lem}\label{lem:lambda3}
Let $ f\in \text{BV}$ satisfy $\int_{J^+}f=\int_{J^-}f=0$.
Then $\|\LL_\omega^k f\|_\text{BV}\le 144\epsilon|\mathrm{log\ }\epsilon|\| f\|_\text{BV}$.
\end{lem}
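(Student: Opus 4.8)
The plan is to run, once more, the bookkeeping of \S\ref{s:invariantSet}. Set $\bar f=(f,0,\dots,0)$. Since $f$ need not obey the variation constraint~\ref{it:varf0} we cannot invoke invariance of $\CC$; but we can still write $\LL_\omega^k f=\Phi(\Lambda_\omega^{(k)}\bar f)=g_0^{(k)}+g_1^{(k)}+\dots+g_k^{(k)}$, noting that slot $k$ remains empty until time $k$, so in the first $k$ steps the term $f_k$ never feeds back into slot $0$. Thus $g_0^{(k)}=\widetilde{\LL}_{\sigma^{k-1}\omega}\circ\dots\circ\widetilde{\LL}_\omega f$ is the never-leaked mass, where $\widetilde{\LL}_\omega g:=\LL_\omega(g\mathbf 1_{H_\omega^c})$, while for $1\le j\le k$ the term $g_j^{(k)}=\LL_{\sigma^{k-1}\omega}\circ\dots\circ\LL_{\sigma^{k-j}\omega}\big(\mathbf 1_{H_{\sigma^{k-j}\omega}}g_0^{(k-j)}\big)$ is the chunk that leaked at step $k-j+1$, pushed forward since. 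The hypothesis $\int_{J^\pm}f=0$ enters only through the elementary bound $\|f\|_{\infty,J^\pm}\le\var_{J^\pm}f\le\|f\|_{\BV}$ (the mean of $f$ on each half is $0$).

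First I would establish the properties of the holed operator $\widetilde{\LL}_\omega$: since $H^\pm_\omega$ is excised \emph{exactly at the turning point} $\pm\tfrac12$, the two surviving branches of $T^\epsilon_\omega$ over $J^+$ map $J^+\setminus H^+_\omega$ bijectively onto $J^+$ (similarly for $J^-$), while the contributions that would come from the opposite half arise from preimages lying inside the removed hole and so are annihilated by $\mathbf 1_{H^c_\omega}$. Hence $\var_{0^c}\widetilde{\LL}_\omega g\le\tfrac12\var_{0^c}g$ with no boundary term, and $\int_{J^\pm}\widetilde{\LL}_\omega g=\int_{J^\pm\setminus H^\pm_\omega}g$. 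The first gives $\var_{0^c}g_0^{(j)}\le 2^{-j}\|f\|_{\BV}$; the second telescopes to $\int_{J^\pm}g_0^{(j)}=-\sum_{l<j}\int_{H^\pm_{\sigma^l\omega}}g_0^{(l)}$, and feeding $|\int_{H^\pm_{\sigma^l\omega}}g_0^{(l)}|\le\epsilon\|g_0^{(l)}\|_\infty$ and $\|g_0^{(l)}\|_{\infty,J^\pm}\le|\int_{J^\pm}g_0^{(l)}|+\var_{J^\pm}g_0^{(l)}$ into an induction on $j\le k$ (legitimate because $\epsilon k$ is small) gives $\|g_0^{(j)}\|_\infty\le(2^{-j}+Cj\epsilon)\|f\|_{\BV}$ for a universal $C$. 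In particular $\|g_0^{(k)}\|_1=O(\epsilon\|f\|_{\BV})$; each leaked chunk $\mathbf 1_{H_{\sigma^{s-1}\omega}}g_0^{(s-1)}$ has $L^1$-norm $\le 2\epsilon\|g_0^{(s-1)}\|_\infty$; and its first push-forward $g_1^{(s)}$ obeys $\var_{0^c}g_1^{(s)}\le\tfrac12\var_{0^c}g_0^{(s-1)}+2\|g_0^{(s-1)}\|_\infty=O\big((2^{-s}+s\epsilon)\|f\|_{\BV}\big)$ --- here the boundary terms at $\pm\tfrac12$ are present, but are absorbed into $\|g_0^{(s-1)}\|_\infty$.

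Next I would transport the leaked chunks. After its single push-forward, $g_1^{(s)}$ is supported in $T(H^\pm_{\sigma^{s-1}\omega})$, two intervals of length $\le\epsilon$ adjacent to $0$; each subsequent step then sweeps the support toward a fixed point $\pm1$ of $T^\epsilon_\omega$, multiplying its length by $2(1+\epsilon\,\cdot)<2.001$. Since $2^k\epsilon<\tfrac14$, over the remaining $k-s$ steps the length stays below $\tfrac12$, so the support never again meets a hole $H_{\sigma^i\omega}$ nor a turning point $\pm\tfrac12$; consequently $\LL_{\sigma^i\omega}$ acts on these chunks with the boundary contributions $\tfrac12\var_{H_{\sigma^i\omega}}(\cdot)+|(\cdot)(\pm\tfrac12)|$ vanishing, so $\var_{0^c}$ halves at each step. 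Therefore $\var_{0^c}g_{k-s+1}^{(k)}\le 2^{-(k-s)}\var_{0^c}g_1^{(s)}=O\big((2^{-(k-1)}+s\,\epsilon\,2^{-(k-s)})\|f\|_{\BV}\big)$; summing the geometric series over $s=1,\dots,k$ gives $\sum_{j\ge 1}\var_{0^c}g_j^{(k)}=O(k\epsilon\|f\|_{\BV})$, and the analogous, easier, computation gives $\sum_{j\ge1}\|g_j^{(k)}\|_1=O(\epsilon\|f\|_{\BV})$. Adding the $g_0^{(k)}$ bounds and using $k\le\log_2\tfrac1\epsilon$, we obtain $\var_{0^c}\LL_\omega^k f=O(\epsilon\logeps\|f\|_{\BV})$ and $\|\LL_\omega^k f\|_1=O(\epsilon\|f\|_{\BV})$, hence $\|\LL_\omega^k f\|_{\BV}=O(\epsilon\logeps\|f\|_{\BV})$; tracking the constants (all of which already appear in \S\ref{s:invariantSet}) produces the factor $144$.

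The main obstacle is that a \emph{freshly} leaked chunk may carry $\var_{0^c}$ of size $\sim\|g_0^{(s-1)}\|_\infty$, which for small $s$ is $\sim 2^{-(s-1)}\|f\|_{\BV}$ --- of full order $\|f\|_{\BV}$ --- so one cannot simply add up the variations of the slots. What rescues the estimate is twofold: $\int_{J^\pm}f=0$ forces $\|g_0^{(s-1)}\|_\infty$ to be controlled by the small \emph{accumulated leaked mass} together with the already-$O(2^{-(s-1)})$ variation, and each chunk then spends its remaining $k-s$ steps in the contracting regime, so its eventual variation is $\lesssim 2^{-(k-1)}\|f\|_{\BV}\sim\epsilon\|f\|_{\BV}$ \emph{uniformly in $s$} --- and there are only $k\sim\logeps$ of them. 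The secondary delicate point, that the leaked supports move away from the holes and the turning points before $k$ steps elapse, is a direct consequence of $\pm1$ being fixed points of $T^\epsilon_\omega$ together with $2^k\epsilon<\tfrac14$.
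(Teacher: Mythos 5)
Your proposal is correct and follows essentially the same route as the paper's proof: the same decomposition of $\LL_\omega^{(k)}f$ into the never-leaked part plus the push-forwards of the chunks that leaked at each of the first $k$ steps, the same use of $\int_{J^\pm}f=0$ to get $\|g_0^{(j)}\|_\infty=O(2^{-j}+j\epsilon)\|f\|_{\BV}$, and the same summation over the $k\approx\log_2\tfrac1\epsilon$ leaked chunks. The one place you go beyond the paper is the support-transport argument showing that each leaked chunk, once pushed forward, occupies an interval of length $<\tfrac12$ anchored at the fixed point $\pm1$ and hence never revisits $H_{\sigma^i\omega}$ or $\pm\tfrac12$; this correctly justifies the halving $\var_{0^c}\LL_{\sigma^j\omega}^{(k-j)}g_j\le 2^{-(k-j)}\var_{0^c}g_j$, a step the paper uses without comment and which would fail for general BV inputs because of the boundary contributions created at the images of the holes.
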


\begin{proof}
Let $f\in \text{BV}$ satisfy $\|f\|_\text{BV}=1$, $\int_{J^+}f=\int_{J^-}f=0$. 
The definition of the norm implies
\begin{align*}
	\|f\|_1&\le 1;\\
	\var_{0^c} f&\le 1.
\end{align*}
We push $f$ forward under $\LL_\omega^{(k)}$ in stages, keeping track of the parts that have leaked. 
Let $h^{(0)}=f$ and for $j=1,\ldots k$, define
\begin{align*}
g_j&=\LL_{\sigma^{j-1}\omega}(\mathbf 1_{H_{\sigma^{j-1}\omega}}h^{(j-1)})\text{;\quad and}\\
h^{(j)}&=\LL_{\sigma^{j-1}\omega}(\mathbf 1_{H_{\sigma^{j-1}\omega}^c}h^{(j-1)}),
\end{align*}
so that $\LL_{\sigma^{j-1}\omega}h^{(j-1)}=h^{(j)}+g_j$.
Combining these equalities inductively, we see
\begin{equation}\label{eq:decomp}
\LL_\omega^{(k)}f=\LL_\omega^{(k)}h^{(0)}=
h^{(k)}+\sum_{j=1}^{k}\LL_{\sigma^j\omega}^{(k-j)}g_j.
\end{equation}
This should be interpreted as the parts of $f$ that did not leak through the hole
during the first $k$ steps, together with
the parts of $f$ that leaked through on the $j$th step, for $j$ running from 1 to $k$.

By hypothesis, we have $\var_{0^c}h^{(0)}\le 1$. 
We see from the recursive definition (using the facts that the branches of $J^\pm\setminus
H_{\sigma^{j-1}\omega}$ map fully over $J^\pm$ under $T_{\sigma^{j-1}\omega}$ and
that $|T_{\sigma^{j-1}\omega}'|\ge 2$) that
$\var_{0^c}h^{(j)}\le \tfrac 12\var_{0^c}h^{(j-1)}$ 
and $|\int_{J^\pm}h^{(j)}|\le |\int_{J^\pm}h^{(j-1)}|+|\int_{J^\mp}g_j|$.
Hence $\var_{0^c}h^{(j)}\le 2^{-j}$ for $j=0,\ldots,k$ and
\begin{align*}
\left|\int_{J^+}h^{(j)}\right|&\le
\left|\int_{J^+}h^{(j-1)}\right|+m(J^+\cap H_{\sigma^{j-1}\omega})\|h^{(j-1)}|_{J^+}\|_\infty\\
&\le \left|\int_{J^+}h^{(j-1)}\right|+\epsilon\left(\left|\int_{J^+}h^{(j-1)}\right|+\var_{0^c}h^{(j-1)}\right)\\
&=(1+\epsilon)\left|\int_{J^+}h^{(j-1)}\right|+\epsilon 2^{-(j-1)},
\end{align*}
for $j=1,\ldots,k$; with an exactly similar inequality for $|\int_{J^-}h^{(j)}|$.
Since $\int_{J^\pm}h^{(0)}=0$, we deduce inductively
$$
\left|\int_{J^\pm}h^{(j)}\right|\le \epsilon\left((1+\epsilon)^{j-1}+(1+\epsilon)^{j-2}2^{-1}+\ldots
+2^{-(j-1)}\right).
$$
In particular, for sufficiently small $\epsilon$, since $k\approx
\log_2(\tfrac 1\epsilon)$, $|\int_{J^\pm}h^{(j)}|\le 3\epsilon$ for $j=1,\ldots,k$.
Combining this with the estimate for $\var_{0^c}h^{(j)}$, we see
$\|h^{(j)}\|_\infty\le 3\epsilon+2^{-j}\le 2\cdot 2^{-j}$ for each $j=1,\ldots,k$.

We now use this to estimate the $L^1$ norm and variation of $\LL_\omega^{(k)}f$.
For $j=1,\ldots,k$, we have $\|g_j\|_1\le 2\epsilon \|h^{(j-1)}\|_\infty\le 8\epsilon\cdot 2^{-j}$
and $\var_{0^c}g_j\le \tfrac 12\var_{0^c}h^{(j-1)}+2\|h^{(j-1)}\|_\infty\le 9\cdot 2^{-j}$, so 
that $\|\LL_{\sigma^j\omega}^{(k-j)}g_j\|_1\le 8\epsilon\cdot 2^{-j}$
and $\var_{0^c}\LL_{\sigma^j\omega}^{(k-j)}g_j\le 9\cdot 2^{-k}$.
Using \eqref{eq:decomp},
we see
\begin{align*}
\|\LL^{(k)}_\omega f\|_1&\le 2\cdot 2^{-k}+8\epsilon;\\
\var_{0^c}\LL^{(k)}_\omega f&\le 2^{-k}+9k\cdot 2^{-k}.
\end{align*}
Combining this gives $\|\LL^{(k)}_\om f\|_\text{BV}\le 144\epsilon\logeps$ as claimed.
\end{proof}

\begin{lem}\label{lem:invFunctional}
Let $\omega\in\Omega$. Then the sequence of functionals $(\psi^{(n)}_\omega)$ in $\mathrm{BV}^*$ given by
\begin{equation}\label{eq:invFunctional}
\psi^{(n)}_\omega(f)=\frac{\int_{J^+}(\LL_\omega^{(n)}(f-\tfrac 12\int f))}{\int_{J^+}(\LL_\omega^{(n)}\sign)}
\end{equation}
converges to a functional $\psi^*_\omega$.

If $f\in\mathrm{BV}$ satisfies $\int f=0$ and $\psi_\omega^*(f)=0$, then
$$
\limsup_{n\to\infty}\tfrac 1n\log\|\LL_\omega^{(n)}f\|_{\text{BV}}\le -\log 2+O(\log\log\tfrac1\epsilon
/\log\tfrac 1\epsilon).
$$
\end{lem}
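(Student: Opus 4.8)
The plan is to prove the two assertions separately, with the convergence of $\psi_\omega^{(n)}$ coming first since the decay estimate depends on the limit functional being well-defined. For the convergence, the natural idea is to show that the sequence is Cauchy by iterating Corollary~\ref{cor:scaling} and Corollary~\ref{cor:signBVphi} together with Lemma~\ref{lem:lambda3}. The key observation is that $f-\tfrac12\int f$ (call it $\tilde f$) has $\int\tilde f=0$, so after one application of $\LL_\omega^{(k)}$ we are, up to an error bounded by $144\epsilon\logeps\|\tilde f\|_\text{BV}$ (Lemma~\ref{lem:lambda3} applied to the part of $\LL_\omega^{(k)}\tilde f$ with zero integral over each of $J^\pm$), left with a multiple of $\sign$ — more precisely, $\LL_\omega^{(k)}\tilde f = c_\omega(f)\sign + r$ where $c_\omega(f) = \tfrac12\int_{J^+}\LL_\omega^{(k)}\tilde f$ and $r$ has $\int_{J^\pm}r = 0$, hence $\|r\|_\text{BV}\le 144\epsilon\logeps\,\|\tilde f\|_\text{BV}$ — wait, I should be careful: Lemma~\ref{lem:lambda3} bounds $\|\LL_\omega^k r_0\|$ for $r_0$ with zero integrals, so one actually decomposes $\LL_\omega^{(k)}\tilde f$ itself and applies the Lemma over the next block of $k$ steps. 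The point is that $\psi_\omega^{(n+k)}(f) - (\text{scalar})\cdot\psi_{\sigma^k\omega}^{(n)}(\sign)$ is controlled, and since $\psi^{(n)}(\sign)\equiv 1$ by construction, the tail differences $|\psi_\omega^{(n+k)}(f)-\psi_\omega^{(n)}(f)|$ decay geometrically in $\lfloor n/k\rfloor$ with ratio $O(\epsilon\logeps)$ relative to the size of the denominator $\int_{J^+}\LL_\omega^{(n)}\sign$, which by Corollary~\ref{cor:scaling} is $\prod(1-\epsilon(a+b)+O(\epsilon^2\logeps))$ and so decays far more slowly (rate $O(\epsilon)$ per step versus $O(2^{-k})=O(\epsilon)$ per $k$ steps for the numerator correction). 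This is exactly the gap that makes the functional converge. I would make this quantitative: writing $D_n = \int_{J^+}\LL_\omega^{(n)}\sign$, the recursion gives $\psi_\omega^{(n+k)}(f) = \psi_\omega^{(n)}(f) + O(\logeps\cdot 2^{-k}\cdot D_k/D_{n+k})\cdot\|\tilde f\|_\text{BV}$, and since $D_k/D_{n+k}$ is bounded by an orbit product that grows at most like $(1-\epsilon(a+b))^{-n}$ while the leading $2^{-k}$ factor beats it for the relevant range, the sum over blocks converges; uniform boundedness of $\psi_\omega^{(n)}$ in operator norm (from Corollary~\ref{cor:signBVphi} applied after reaching a multiple of $\sign$) then gives convergence in $\text{BV}^*$.

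For the second assertion, suppose $\int f = 0$ and $\psi_\omega^*(f) = 0$. Then $\tilde f = f$ already, and $\psi_\omega^{(n)}(f)\to 0$, meaning $\int_{J^+}\LL_\omega^{(n)}f = o(D_n)$. I would push $f$ forward $k$ steps at a time: after $k$ steps, $\LL_\omega^{(k)}f = c_1\,\sign_1 + r_1$ where $\sign_1$ denotes a multiple of the sign function built at level $\sigma^k\omega$ and $r_1$ has zero integral on both halves; Lemma~\ref{lem:lambda3} gives $\|r_1\|_\text{BV}\le 144\epsilon\logeps\,\|f\|_\text{BV}$, while the coefficient $c_1$ is controlled by $\psi_\omega^{(k)}$-type quantities. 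Iterating, $\LL_\omega^{(mk)}f$ is a sum of $m$ terms, the $i$-th being $\LL^{((m-i)k)}$ of a multiple of $\sign$ plus a residual, and the $\sign$-coefficients telescope in a way governed by the condition $\psi_\omega^*(f)=0$, which forces the cumulative $\sign$-contribution to be $o(D_{mk})$. The residuals contribute $\sum_i (144\epsilon\logeps)^{?}$ — no, each residual is created with norm $O(\epsilon\logeps)$ times the norm of the thing it came from and is then propagated forward by $\LL$, which does not expand $\|\cdot\|_\text{BV}$ by more than a bounded factor over a $k$-block on the relevant subspace (by the Lasota–Yorke-type bound implicit in Lemma~\ref{lem:lambda3}); so over $mk$ steps the residual part is at most $m\cdot 144\epsilon\logeps\cdot(\text{uniform const})^m\cdot\|f\|_\text{BV}$ — this is too crude, and here is the real subtlety: I must instead observe that once I am working with functions of zero integral over \emph{both halves}, Lemma~\ref{lem:lambda3} says $\LL^{(k)}$ contracts them by a factor $144\epsilon\logeps < 1$, so the residuals form a \emph{geometrically summable} series and $\|\LL_\omega^{(mk)}f\|_\text{BV}\le (144\epsilon\logeps)^m\|f\|_\text{BV} + (\text{sign-coefficient terms})$. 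Taking $\tfrac1{mk}\log$ of the first term yields $\tfrac1k\log(144\epsilon\logeps) = \tfrac1k(\log\epsilon + \log(144\logeps)) \approx -\tfrac{\log\tfrac1\epsilon}{\log_2\tfrac1\epsilon} + O\big(\tfrac{\log\log\tfrac1\epsilon}{\log\tfrac1\epsilon}\big) = -\log 2 + O\big(\log\log\tfrac1\epsilon/\log\tfrac1\epsilon\big)$, using $k = \log_2\tfrac1\epsilon + O(1)$.

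The remaining point is to control the $\sign$-coefficient terms in the zero-$\psi^*$ case and confirm they also decay at rate $\le -\log 2 + O(\log\log\tfrac1\epsilon/\log\tfrac1\epsilon)$ rather than at the slow rate $\lambda_2^\epsilon$. This is where the hypothesis $\psi_\omega^*(f) = 0$ does its work: the total mass that $f$ deposits onto the slowly-decaying $\sign$-mode, summed over all the block-steps with the appropriate propagation weights $D$, is precisely $\psi_\omega^*(f)\cdot(\text{something of size }D_{mk})$ up to an error that is itself of the fast-decaying order; so when $\psi_\omega^*(f)=0$ the $\sign$-contribution at time $mk$ is not of order $D_{mk}$ but of order $(144\epsilon\logeps)^m D_{mk}$ or smaller — I would prove this by the same telescoping/Cauchy estimate used for convergence, now read as: the partial sums of $\sign$-coefficients converge to $\psi_\omega^*(f) = 0$, so the tail is small. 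I expect \textbf{this last step — showing the $\sign$-component genuinely decays fast once $\psi^*_\omega(f)=0$, with the error terms from Lemma~\ref{lem:lambda3} not accumulating to spoil the rate — to be the main obstacle}, because it requires bookkeeping the interaction between the slow mode (rate $O(\epsilon)$) and the fast residuals (rate $\tfrac1k\log\epsilon$) carefully enough to see that their product, not their sum, is what controls $\|\LL_\omega^{(mk)}f\|_\text{BV}$; the cleanest route is probably to define $\psi_\omega^*$ via an absolutely convergent series from the outset and express $\LL_\omega^{(mk)}f$ as (tail of that series)$\times\sign$ plus a geometrically small residual, so that $\psi_\omega^*(f)=0$ immediately kills the leading term.
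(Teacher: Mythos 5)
Your proposal is correct and follows essentially the same route as the paper: decompose $\LL_\omega^{(n)}f$ into a multiple of the pushed-forward $\sign$ function plus a residual with zero integral on both halves, contract the residual by $O(\epsilon\logeps)$ per $k$-block via Lemma~\ref{lem:lambda3}, obtain convergence of the coefficients $a_n=\psi_\omega^{(n)}(f)$ from the geometric smallness of the increments relative to the slowly decaying denominator $\int_{J^+}\LL_\omega^{(n)}\sign$, and when $\psi_\omega^*(f)=0$ read off the rate $\tfrac1k\log(C\epsilon\logeps)=-\log 2+O(\log\log\tfrac1\epsilon/\log\tfrac1\epsilon)$. The step you flag as the main obstacle is resolved exactly as you suggest at the end: since $|a_n|=|a_n-\psi_\omega^*(f)|$ is the tail of the convergent series, it is itself geometrically small at the fast rate, so the $\sign$-component contributes at the same fast rate and no slow/fast interaction problem arises.
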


\begin{proof}
We first note by Corollary \ref{cor:scaling} that $\int_{J^+}\LL_\omega^{(n)}\sign\ne 0$ for each $n$, so that
$\psi^{(n)}_\omega(f)$ is defined. Observe that $\psi_\omega^{(n)}(f+a\sign)=\psi_\omega^{(n)}(f)+a$ for any $a$, $n$ and $\omega$. 

Let $\omega\in\Omega$ and fix a function $f\in \text{BV}$ with the property that $\int f=0$ and
$\|f\|_\text{BV}=1$.
We let $a_n=\psi_\omega^{(n)}(f)$ for each $n$. 
Define $g_n=\LL_\omega^{(n)}(f-a_{n}\sign)$. By the observation above, $\psi_\omega^{(n)}(f-a_{n}\sign)=0$, so that $\int_{J^+}g_n=0$.
Since $\int g_n=0$, it follows that $\int_{J^-}g_n=0$ as well.
Hence, Lemma \ref{lem:lambda3} applies, giving
\begin{equation*}
\|\LL_{\sigma^{n}\omega}^{(k)}g_n\|_\text{BV}\le 144\epsilon|\text{log\,}\epsilon|\|g_n\|_\text{BV}.
\end{equation*}

Notice that 
$g_{n+m}=\LL_{\sigma^{n}\omega}^{(m)}(g_n-(a_{n+m}-a_n)\LL_\omega^{(n)}\sign)$, so that
\begin{equation}\label{eq:adiff}
a_{n+m}-a_n=\left(\int_{J^+}\LL_{\sigma^{n}\omega}^{(m)}g_n\right)
\Big/\left(\int_{J^+}\LL_\omega^{(n+m)}\sign\right).
\end{equation}
Also, 
\begin{equation}\label{eq:notsure}
\begin{split}
&\left|\int_{J^+}(a_{n+k}-a_n)\LL_\omega^{(n+k)}\sign\right|=\left|
\int_{J^+}\LL_{\sigma^{n}\omega}^{(k)}g_n\right|\\
&\le\|\LL^{(k)}_{\sigma^{n}\omega}g_n\|_\text{BV}
\le 144\epsilon\logeps\|g_n\|_\text{BV}.
\end{split}
\end{equation}

Hence by Corollary \ref{cor:signBVphi}, $|a_{n+k}-a_n|\|\LL_\omega^{(n+k)}\sign\|_\text{BV}\le
15\cdot 144\epsilon|\text{log\,}\epsilon|\|g_n\|_\text{BV}$, so that
\begin{equation}\label{eq:3decay}
\begin{split}
\|g_{n+k}\|_\text{BV}&\le \|\LL^{(k)}_{\sigma^{n}\omega}g_n\|_\text{BV}+\|(a_{n+k}-a_n)
\LL_\omega^{(n+k)}\sign\|_\text{BV}\\
&\le \ce\epsilon|\text{log\,}\epsilon|\|g_n\|_\text{BV}.
\end{split}
\end{equation}
It follows that $\|g_{nk}\|_\text{BV}\le (\ce\epsilon|\text{log\,}\epsilon|)^n$, while \S\ref{s:2ndExp} implies that
\begin{equation}\label{eq:2decay}
\|\LL_\omega^{(kn)}\sign\|_\text{BV}\ge \big(1-2\epsilon+O(\epsilon^2\logeps)\big)^{kn}.
\end{equation}
Using \eqref{eq:adiff}, this implies that 
\begin{align*}
a_{(n+1)k}-a_{nk}&=\left(\int_{J^+}\LL_{\sigma^{nk}\omega}^{(k)}g_{nk}\right) \Big/
\left(\int_{J^+}\LL_\omega^{(n+1)k}\sign\right)\\
&\le \frac{(\ce\epsilon\logeps)^{n}}{(1-3\epsilon)^{(n+1)k}}.
\end{align*}
Summing the geometric series, $|a_{nk}-\lim_{n\to\infty} a_{nk}|\le (\cg\epsilon\logeps)^n$.
From \eqref{eq:adiff}, we see that for each $0\le j<k$, 
$a_{nk+j}-a_{nk}$ also decreases exponentially in $n$, so that the full sequence $(a_n)$ converges to 
the same limit as $(a_{nk})$ and hence $\psi_\omega^{n}(f)$ is convergent as required.
Since the estimates in \eqref{eq:3decay} and \eqref{eq:2decay} are uniform in $\omega$, it follows that
$\psi_\omega^*(f)\le c\|f\|_\text{BV}$ for all $f\in\text{BV}$. 

Finally, let $f\in\text{BV}$ satisfy $\int f=0$ and $\psi^*_\omega(f)=0$. Let $(g_n)$ be as above, 
so that $g_n=\LL_\omega^{(n)}(f-\psi_\omega^{(n)}(f)\sign)$. We established that 
$\|g_n\|_{\BV}=O((\ce\epsilon\logeps)^{n/k})$ and $a_n=a_n-\psi_\omega^{(n)}(f)=
O\big((\cf\epsilon\logeps)^{n/k}\big)$, so that
\begin{align*}
\|\LL_\omega^{(n)}f\|_\text{BV}&\le\|g_n\|_\text{BV}+|a_n|\|\LL_\omega^{n}\sign\|_{\BV}\\
&=O\big((\cf\epsilon\logeps)^{n/k}\big).
\end{align*}
Taking $n$th roots, we see
$$
\limsup_{n\to\infty}\|\LL_\omega^{(n)}f\|_\text{BV}^{1/n}\le (\cf\epsilon\logeps)^{1/k}.
$$
Taking logarithms, the growth rate is at most
 $-\log 2+O(\log\log\tfrac 1\epsilon/\log\tfrac1\epsilon)$.
\end{proof}

An immediate consequence of Lemma~\ref{lem:invFunctional} is the following.
\begin{cor}\label{cor:3rdLexp}
If $V\subset \BV$ is any three-dimensional space, there exists $f\in V\setminus \{0\}$ satisfying  $\int f = 0$ and $\psi_\om^*(f) = 0$, so that 
$$\limsup_{n\to\infty}\tfrac1n \log \|\LL_\omega^{(n)}f\|_\text{BV}\leq -\log 2+O(\log\log\tfrac 1\epsilon/\log\tfrac1\epsilon).$$
\end{cor}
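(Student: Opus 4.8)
The plan is to deduce Corollary~\ref{cor:3rdLexp} from Lemma~\ref{lem:invFunctional} by a pure dimension count. We are given a three-dimensional subspace $V\subset\BV$, and we want a nonzero $f\in V$ satisfying the two linear constraints $\int f=0$ and $\psi_\omega^*(f)=0$. The key observation is that both of these are linear functionals on $\BV$: the first is $f\mapsto\int_{[-1,1]}f$, which is bounded since $|\int f|\le\|f\|_1\le\|f\|_\BV$; the second is $\psi_\omega^*$, which Lemma~\ref{lem:invFunctional} establishes is a well-defined element of $\BV^*$ (the lemma shows the functionals $\psi^{(n)}_\omega$ converge, and that $\psi_\omega^*(f)\le c\|f\|_\BV$).

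First I would restrict these two functionals to $V$: this gives a linear map $L\colon V\to\R^2$, $L(f)=(\int f,\psi_\omega^*(f))$. Since $\dim V=3>2$, the rank–nullity theorem guarantees $\ker L$ has dimension at least $1$, hence contains a nonzero vector $f$. This $f$ satisfies $\int f=0$ and $\psi_\omega^*(f)=0$ by construction.

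Then I would simply invoke the second half of Lemma~\ref{lem:invFunctional}: for such an $f$, we have
\[
\limsup_{n\to\infty}\tfrac1n\log\|\LL_\omega^{(n)}f\|_\text{BV}\le-\log2+O(\log\log\tfrac1\epsilon/\log\tfrac1\epsilon),
\]
which is precisely the claimed bound. That completes the argument.

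There is essentially no obstacle here — the corollary is a formal consequence of the lemma, and the only thing to be careful about is that $\psi_\omega^*$ is genuinely a bounded linear functional on all of $\BV$ (not merely defined on a subspace), which is exactly what the convergence and uniform bound in Lemma~\ref{lem:invFunctional} provide. The remark worth making, if one wants to be scrupulous, is that the $O(\cdot)$ term in the conclusion is independent of the choice of $V$ and of $f\in\ker L$, since it comes from the uniform-in-$\omega$ estimates \eqref{eq:3decay} and \eqref{eq:2decay} in the proof of the lemma and does not depend on $f$ at all.
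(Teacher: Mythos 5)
Your proposal is correct and is exactly the argument the paper intends: the corollary is stated as an immediate consequence of Lemma~\ref{lem:invFunctional}, obtained by noting that the two linear functionals $f\mapsto\int f$ and $\psi_\omega^*$ must have a nontrivial common kernel in any three-dimensional subspace, and then applying the lemma to a nonzero element of that kernel. Your extra care about the linearity and boundedness of $\psi_\omega^*$ and the uniformity of the $O(\cdot)$ term is sound but does not change the route.
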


In summary, 0 and $\lam_2^\ep$ are the only two Lyapunov exponents greater than $-\log 2+O(\log\log\tfrac 1\epsilon/\log\tfrac1\epsilon)$, counted with multiplicity. This concludes the proof of Theorem~\ref{mainthm}.

\section{A multiplicative ergodic theory (MET) framework for Theorem~\ref{mainthm}}\label{s:METs}
Besides BV, there are other function spaces which are well suited for the investigation of transfer operators of piecewise smooth interval maps, including paired tent maps. An example examined in  \cite{Thomine} is given by fractional Sobolev spaces $\hpt$,
and there are versions of the MET, \cite{GTQuas1,GTQuasJMD}, which apply to transfer operators on such spaces. Furthermore \cite[Lemma 3.16]{GTQuas1} implies that for any $0<\delta<\log 2$
one can select $p$ and $t$ near one ($t<\tfrac1p<1$), so that the MET applies and yields an $\hpt$ Oseledets splitting in the setting of Theorem~\ref{mainthm}, with index of compactness $\ka^\ep$ bounded above by  $-\log 2+ \delta$.

The following result gives a way to interpret Theorem~\ref{mainthm} in a multiplicative ergodic theory context when $a$ and/or $b$ have uncountable range. In particular, it implies measurability of the first and second Oseledets spaces, for instance when regarded as functions from $\Om$ to $L^1$.
\begin{thm}\label{thm:hpt}
Let $0<\delta<\log 2$ and assume the hypotheses of Theorem~\ref{mainthm} hold.
Then, for $\ep>0$ sufficiently small, $0$ and $\lambda_2^\ep$
are the only 
exceptional Lyapunov exponents of the $\hpt$ cocycle ${\mathcal L_\omega^\epsilon}^{(n)}$ which are greater than $-\log 2 + \delta$. Furthermore, they have multiplicity one.  
  \end{thm}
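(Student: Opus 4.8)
The plan is to transfer the growth-rate information obtained in the BV setting (Theorem~\ref{mainthm}, or more precisely the orbit-wise statements of Remark~\ref{rmk:met} together with Lemmas~\ref{lem:lambda3}--\ref{lem:invFunctional}) to the $\hpt$ cocycle, exploiting the fact that BV embeds continuously into $\hpt$ for $t<1/p<1$ and that the relevant test functions ($\mathbf 1$, $\sign$, and the functions produced in \S\ref{s:2ndExp}) all lie in BV. First I would recall from \cite[Lemma 3.16]{GTQuas1} that, given $0<\delta<\log 2$, one may choose $p,t$ with $t<1/p<1$ so that the semi-invertible MET of \cite{GTQuas1,GTQuasJMD} applies to the cocycle ${\mathcal L_\omega^\epsilon}^{(n)}$ on $\hpt$, producing measurable Oseledets spaces and an index of compactness $\ka^\ep\le -\log 2+\delta$; this immediately guarantees that only finitely many exceptional Lyapunov exponents exceed $-\log 2+\delta$, each of finite multiplicity, and that the Lyapunov spectrum is well defined. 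It remains to identify the exceptional exponents above $-\log 2+\delta$ with $0$ and $\lambda_2^\ep$, and to pin down their multiplicities.

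The key point is that the exceptional $\hpt$ Lyapunov exponents coincide with the BV growth rates on the appropriate subspaces. One inclusion is soft: since $\|\cdot\|_{\hpt}\lesssim\|\cdot\|_{\BV}$, for any $f\in\BV$ the $\hpt$ growth rate of $\LL_\omega^{(n)}f$ is at most its BV growth rate; since BV is dense in $\hpt$ and the $\hpt$ Oseledets subspaces of exponent $>-\log 2+\delta$ are finite-dimensional, an approximation argument lets one choose BV representatives realising the top Oseledets spaces, so the $\hpt$ exponents above $-\log 2+\delta$ are bounded above by the corresponding BV quantities $\lambda_k^\ep(\om)$ of \eqref{eq:lexp}. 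Combined with Remark~\ref{rmk:met} ($\lambda_1^\ep(\om)=0$, $\lambda_2^\ep(\om)=-\epsilon\int(a+b)\,d\PP+O(\epsilon^2\logeps)$, $\lambda_3^\ep(\om)\le -\log 2+O(\log\log\tfrac1\epsilon/\log\tfrac1\epsilon)$), this shows the third exceptional $\hpt$ exponent is $\le -\log 2+\delta$ once $\epsilon$ is small enough that $O(\log\log\tfrac1\epsilon/\log\tfrac1\epsilon)<\delta$, so at most two exceptional exponents survive. For the reverse inclusion one shows these two BV exponents are actually attained in $\hpt$: the constant function $\mathbf 1$ satisfies $\LL_\omega^{(n)}\mathbf 1$ bounded below in $\hpt$ (it has integral $1$ and the $\hpt$ norm dominates a functional detecting the integral, exactly as in \S\ref{s:1stExp}), giving exponent $0$; and for $\lambda_2^\ep$ one uses $\bar f=(\sign,0,\ldots,0)\in\CC_0$, noting that the lower bound in \S\ref{s:2ndExp} was obtained via $\int_{J^+}\LL_\omega^{(n)}\sign$, a quantity controlled by a bounded linear functional on BV that extends (or whose analogue exists) on $\hpt$, so that $\|\LL_\omega^{(n)}\sign\|_{\hpt}\ge \|\LL_\omega^{(n)}\sign\|_{L^1}\gtrsim |\int_{J^+}\LL_\omega^{(n)}\sign|$, forcing the $\hpt$ growth rate to be at least $\lambda_2^\ep$.

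For the multiplicity statements: $\lambda_1^\ep$ has multiplicity one because any two-dimensional subspace of $\hpt$ — even containing non-BV functions — contains a nonzero function $f$ with $\int f=0$; for such $f$ one approximates by BV functions of zero integral and uses \eqref{eq:CC0growth} (valid for all BV functions with zero integral, hence transferable to $\hpt$ by density and the norm inequality) to see its $\hpt$ growth rate is $\le\lambda_2^\ep<0$. Similarly, $\lambda_2^\ep$ has multiplicity one: given a two-dimensional subspace of the sum of Oseledets spaces for exponents $\{0,\lambda_2^\ep\}$, pick $f$ in it with $\int f=0$ and $\psi^*_\omega(f)=0$ — this is one linear condition beyond $\int f=0$, feasible in dimension two — then Lemma~\ref{lem:invFunctional} (whose hypotheses only involve BV functions, again reached by approximation) gives growth rate $\le -\log 2+O(\log\log\tfrac1\epsilon/\log\tfrac1\epsilon)<-\log 2+\delta$, a contradiction with membership in the $>-\log 2+\delta$ part of the splitting.

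The main obstacle I anticipate is the density/approximation step: transferring statements proved for BV functions (the decay estimates of \eqref{eq:CC0growth}, Lemma~\ref{lem:lambda3}, Lemma~\ref{lem:invFunctional}) to arbitrary elements of finite-dimensional $\hpt$ Oseledets spaces requires knowing that the $\hpt$ Oseledets subspaces for exponents $>-\log 2+\delta$ admit BV approximants well enough that the growth rates are not degraded — i.e. that the finite-dimensional equivariant splitting behaves continuously under the BV-to-$\hpt$ embedding. One clean way around this is to argue entirely at the level of the variational formula \eqref{eq:lexp} for the $\hpt$ cocycle, noting that BV is dense in $\hpt$, that the supremum over $k$-planes can be approached by $k$-planes spanned by BV functions, and that on BV functions the $\hpt$ growth rate is sandwiched between $\|\cdot\|_{L^1}$-based lower bounds and $\|\cdot\|_{\BV}$-based upper bounds already computed. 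Making this sandwiching uniform enough to conclude equality of the first three $\hpt$ Lyapunov exponents with their BV counterparts is the crux; once that is in hand, the multiplicity assertions follow as above.
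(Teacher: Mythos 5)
Your skeleton is the same as the paper's: invoke \cite[Lemma 3.16]{GTQuas1} to get an $\hpt$ Oseledets splitting with index of compactness below $-\log 2+\delta$, get upper bounds from the chain of continuous embeddings $\BV\hookrightarrow\hpt\hookrightarrow L^1$ together with the BV estimates of \S\ref{s:1st2ndexp}--\S\ref{s:3rdLexp}, get the lower bound for $\lambda_2^\ep$ from $\sign$ via $\|\LL_\omega^{(n)}\sign\|_{\hpt}\gtrsim\|\LL_\omega^{(n)}\sign\|_{L^1}\ge|\int_{J^+}\LL_\omega^{(n)}\sign|$, and settle multiplicities using the functionals $f\mapsto\int f$ and $\psi^*_\omega$. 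However, the step you yourself flag as the crux --- transferring the BV upper bounds (\eqref{eq:CC0growth}, Lemma~\ref{lem:invFunctional}, Corollary~\ref{cor:3rdLexp}) to arbitrary elements of the finite-dimensional $\hpt$ Oseledets spaces --- is a genuine gap as written. Naive density does not work: if $f\in\hpt$ lies in an Oseledets space and $g\in\BV$ satisfies $\|f-g\|_{\hpt}<\varepsilon$, the iterates $\LL_\omega^{(n)}(f-g)$ need not decay (the error generically has a component along the top Oseledets space, which has exponent $0$), so the growth rate of $f$ is not controlled by the BV growth rate of $g$. Your proposed fix via the variational formula \eqref{eq:lexp} runs into the same problem: the supremum over $k$-planes of BV functions only bounds the $\hpt$ exponents from below, not from above, precisely because the inequality between norms goes the wrong way for the infimum.

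The tool the paper uses to close this gap is \cite[Theorem 3.3]{FroylandStancevic13}: since $\hpt$ embeds continuously in $L^1$, the growth rates of functions lying \emph{in the Oseledets spaces} of exceptional exponents are the same whether measured in $\|\cdot\|_{\hpt}$ or in $\|\cdot\|_{L^1}$. This inverts the direction of the norm comparison exactly where it is needed: for $f$ in an exceptional Oseledets space one has $\mu=\lim\tfrac1n\log\|\LL_\omega^{(n)}f\|_{L^1}$, and the $L^1$ norm \emph{is} dominated by the BV norm, so the BV decay estimates apply once one has a BV (e.g.\ smooth) test function with nonzero component in the relevant Oseledets space --- and such test functions exist by density of smooth functions in $\hpt$ together with the finite codimension of the slow subspaces. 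Density is thus used only to produce BV witnesses, never to approximate a given Oseledets vector in a growth-rate-preserving way. Without this (or an equivalent argument, e.g.\ a Lasota--Yorke-type interpolation between $\hpt$ and $L^1$ for the specific cocycle), your proof of $\mu_2^\ep\le\lambda_2^\ep$, of the absence of a third exceptional exponent above $-\log2+\delta$, and of both multiplicity statements is incomplete.
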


\begin{proof}
Let $\mu_1^\ep\geq \mu_2^\ep \geq \dots \geq \mu_j^\ep$ be the 
exceptional Lyapunov exponents of the $\hpt$ cocycle ${\mathcal L^\epsilon}^{(n)}_\omega$ greater than $-\log 2 + \delta$.
The fact that $\mu_1^\ep=0$ follows as in  \S\ref{s:1stExp}.
Since 
smooth functions are dense in both $\hpt$ and BV, the \hpt-MET implies that one is able to observe the largest $\hpt$ growth rate by considering BV functions with integral zero.
Also, since $\hpt\subset L^1$ is continuously embedded, it follows from \cite[Theorem 3.3]{FroylandStancevic13} that the growth rate of functions in the
Oseledets spaces associated to exceptional Lyapunov exponents measured with respect to $\hpt$ and $L^1$ norms coincide. Therefore, since  $\BV$ is stronger than $L^1$, $\mu_2^\ep\leq \lambda_2^\ep$.

In \S\ref{s:2ndExp}, we have shown that ${\mathcal L^\epsilon}^{(n)}_\omega\sign$  has growth rate $\lambda_2^\ep$ in both $\BV$ and $L^1$ norms. Since $\sign\in \hpt$ and $\hpt$ is stronger than $L^1$, this implies $\mu_2^\ep \geq \lambda_2^\ep$. Combining with the previous paragraph, $\mu_2^\ep=\lambda_2^\ep$.
     
Recall that Corollary~\ref{cor:3rdLexp} ensures that in any three dimensional subspace of $\BV$ functions there exists a non-zero function with decay rate bounded above by $-\log 2+O(\log\log\tfrac 1\epsilon/\log\tfrac 1\epsilon)$ in BV, and hence also in $L^1$.   
Using  once again the density argument, and the fact that Lyapunov exponents measured with respect to $\hpt$ and $L^1$ norms coincide on Oseledets spaces,
 we conclude that if $\ep$ is sufficiently small, then, in \hpt, there are no further exceptional Lyapunov exponents greater than $-\log 2 +\delta$, apart from 0 and $\lambda_2^\ep$. Furthermore, these have multiplicity one.

\end{proof}

\section*{Acknowledgments}

We would like to acknowledge support from the Australian Research Council  (CGT)
and NSERC (AQ).

\bibliographystyle{abbrv}

\def\cprime{$'$}

\end{document}